\newtheorem{lemma}{Lemma}
\newtheorem{theorem}{Theorem}
\newtheorem{example}{Example}
\numberwithin{equation}{section}
\begin{document}
\dedicatory{This work is dedicated to the evolving field of approximation theory, with hope it connects the dot between mathematics and real-world phenomena}



\title[]
{On a novel probabilistic Sampling Kantorovich operators and their application}
\maketitle

\begin{center}
{\bf Digvijay Singh$^1$ Rahul Shukla$^2$, Karunesh Kumar Singh$^3$ } \footnote{Corresponding author: Karunesh Kumar Singh} \\
\vskip0.15in
$^{1,3}$ Department of Applied Sciences and Humanities, Institute of Engineering and Technology, Lucknow, 226021, Uttar Pradesh, India \\
$^{2}$ Department of Mathematics, Deshbandhu College, University of Delhi, India \\
\vskip0.15in

\vskip0.15in

Email: dsiet.singh@gmail.com$^1$, rshukla@db.du.ac.in$^2$,   kksiitr.singh@gmail.com$^3$,
\end{center}

\begin{abstract}
This article starts with the fundamental theory of stochastic type convergence and the significance of uniform integrability in the context of expectation value. A novel probabilistic sampling kantorovich (PSK-operators) is established with the help of classical sampling operators (SK-operators). We establish the proof of the fundamental theorem of approximation and a lemma corresponding to the PSK- operators. Moreover, some examples are illustrated not only in numerical form but also in a detailed study of some important features of an image at different samples. Eventually, a comparative analysis is made on the basis of some parameters like peak signal noise ratio (PSNR), structural similarity index (SSIM) etc. between the classical and probabilistic sense in tabulated form, which connects the whole dots of the theory present in the article.\\
\textbf{MSC:} 41A25, 41A35, 46E30, 47A58, 47B38, 94A12\\
\textbf{Key word:} Stochastic Uniform integrability, peak signal-to-noise ratio (PSNR), structural similarity index (SSIM), Sampling Kantorovich operators, pointwise convergence etc.
\end{abstract}


\section{Background}
The summability theory originates from the sequence space, comes from the branch of pure mathematics, in which the properties regarding convergence and divergence are studied. Mathematician of this era have been interested in the structural theory of topological vector spaces,law of large numbers (LLN), summability theory and combined study of LLN, summmability and integrability. Consequently, because of a great scope of research not only in theoretical and practical application, the classical results of summability theory pushes us to study the sequence spaces. Mainly, the vital role of summability theory is to make the non-convergence series to convergent. Moreover, the sequence space gives leverage to study local properties of summability such as the \`{C}es\`{a}ro matrix etc. \par
The modern theory of summability starts with the matrix transformation which takes one sequence space to another sequence space. As a result, in 1950, Abraham Robinson \cite{P1} gave an idea to study the infinite real or complex number in the form of linear operators over Banach spaces. In 1999,  P. V. Subrahmanyam \cite{P2} established another version of summability in the form of fuzzy numbers. After that, many researchers \cite{P5, P3,P4} have contributed in the area of fuzzy summability in terms of \textit{P}- summability, \textit{A}-statistical convergence in fuzzy numbers, fuzzy Korovkin type. The core application of summability theory comes from the area like probability theory, approximation theory, and differential equations, as shown in \cite{S3,S9,S10,S6,S4,S1,S2,S5,S8,S7}. Recently, Demirci.et.al give a new type statistical summability on scaling time \cite{Turkey1} and Dinar. at.al. \cite{Turkey2} put light on the statistical for of operators via power series summability.\par 
 In past two decades, one of the emerging field of approximation theory comes from the field of sampling theory. Some great researchers contribute significantly in the field of sampling theory and tackle many real-world problems using approximation of operators. For more detail, we suggest the keen readers to go through the articles \cite{RG,ss5,ss6,ss3,ss2,ss1,ss4}. Motivated from these researches, Bardaro with his colleague intuited a novel sampling type operators given as follows;  
\begin{eqnarray}\label{DSK}
    \mathfrak{S}_n(f)(x) := n^d \sum_{k \in \mathbb{Z}^d} \xi(n x - k) \int_{R_k^n} f(u) \, du,
\end{eqnarray}
where the meaning of the parameters $\xi_1,\, \&\,\xi_2$ ar given in section \eqref{Sec2}\par
 Moreover, it has been seen \cite{S11} that the study of stochastic type convergence remains incomplete until we study the stochastic type of uniform integrability, as it provides a condition that bridges the gap between the convergence of random variables and their convergence of expectation of random variables. \par
  Now, keeping all these facts in mind, we do some sort of manipulation as; \( f \in L^1(\mathbb{R}^q) \) is replaced a function with noisy measurement and the integrals are affected by stochastic perturbation, assuming
\( \mathfrak{P}_n(f)(x) \) a random variable. Moreover, while proving the fundamental theorem of approximation \eqref{TH1}, we analyze convergence in expectation, probability, or almost surely defined in \eqref{M0}. Let the observed data be affected by random noise:
\[
f_\beth(x) = f(x) + \varepsilon_\beth(x),
\]
where \( \varepsilon_\beth(x) \) is a stochastic process representing noise (e.g., Gaussian noise with mean 0 and variance \( \tau^2 \)). The \textbf{PSK-operators} is then defined as:
\begin{equation}\label{PSK}
    \mathfrak{P}_n^\beth(f)(x) := \mathfrak{s}_n(f_\beth)(x).
\end{equation}
 
The outline of the article includes the main discussion of SK operators \( \mathfrak{S}_n(f)(x) \), which is used to approximate a function \( f \in L^1(\mathbb{R}^q) \) using convolution with integral mean and sampling kernel. In Section $\eqref{Sec2}$, novel SK operators in the probabilistic framework is discovered called `probabilistic SK-operators (PSK-operators) along with the derivation of a fundamental theorem of approximation with respect to the defined operators. Eventually, we model the real world problem in the form of randomness (image) and investigate key features using PSNR, SSIM, and MAE using the convergence properties in \eqref{M0}
\section{Preliminaries}\label{M0}
\subsection{Basic Classical definition and Matrices}\label{Sec2}
\subsubsection{\textbf{Classical SK-operators}}
Let $ f: \mathbb{R}^q \rightarrow \mathbb{R}$ be a locally integrable function. The classical SK- operators are defined as \eqref{DSK},
where \( \xi \) is a kernel that satisfies standard conditions and \( D_a^n \) is the cell:
\[
D_a^n = \prod_{j=1}^{q} \left[\frac{a_j}{n}, \frac{a_j + 1}{n}\right),
\]
where \( \xi: \mathbb{R}^q \to \mathbb{R} \) is a kernel function satisfying the following properties:
\begin{itemize}
    \item[(i)] \textbf{Unit summation property:} \( \sum_{k \in \mathbb{Z}^q} \xi(x - k) = 1 \) for all \( x \in \mathbb{R}^d \),
    \item[(ii)] \textbf{Kernel Boundedness:} \( \sup_{x \in \mathbb{R}^q} |\xi(x)| < \infty \),
    \item[(iii)] \textbf{Kernel estimation:} There exists \( \delta > 0 \) such that \( |\xi(x)| \leq L (1 + \|x\|)^{-q - \delta} \) for some constant \( L > 0 \).
\end{itemize}
\subsubsection{\textbf{SK-operators in terms of an image}}
    
An image is a representation of the matrix in which each entry denotes the pixel values, but to define the operators in terms of an image using sampling kantorovich operators a 2-dimensional digital grayscale image can be represented
using a step function $ I $ belonging to $L_p(R^2), 1 \leq p < +\infty$, $I$ is defined by:

$$I{(t_1,t_2)}=\sum_{i=1}^{m}\sum_{i=1}^{n} b_{ij}\times \bold{1}_{ij}$$
In the above expression $\bold{1}$ denote the characteristic function such that if $(t_1, t_2)\in (i,i-1]\times (j,j-1]$ then $\bold{1}=1$ otherwise its value will be $0$.\par
\subsection{Arbitrary setting}

Let \( f : \mathbb{R}^q \to \mathbb{R} \) be a measurable function, and let \( \mathfrak{S}_n(f) \) be its approximation using the SK- operators. Then, let us define the following parameters as ;

\subsubsection{\textbf{Point-wise error}} If  $\varsigma_n(x)$ denotes the error between $ \mathfrak{S}_n(f)(x) $ and $f(x) $. Thus, the point error is defined as follows;
\[
\varsigma_n(x) = \left|\mathfrak{S}_n(f)(x) - f(x) \right|.
\]
Similarly, some error metrics in a domain \( D \subset \mathbb{R}^q\)
\begin{itemize}
    \item \textbf{Maximum Error}:
    \[
    \max_{x \in D} \left| \mathfrak{S}_n(f)(x) - f(x) \right|
    \]

    \item \textbf{Minimum Error}:
    \[
    \min_{x \in D} \left| \mathfrak{S}_n(f)(x) - f(x) \right|
    \]

    \item \textbf{Mean \( L^1 \) Error}:
    \[
    \frac{1}{|D|} \int_D \left| \mathfrak{S}_n(f)(x) - f(x) \right| \, dx
    \]

    \item \textbf{Discrete Approximation}:
    \[
    \frac{1}{N} \sum_{i=1}^N \left| \mathfrak{S}_n(f)(x_i) - f(x_i) \right|
    \]
\end{itemize}

\subsubsection{\textbf{Basic classical matrices in image form}}
Let \( f(i,j) \in [0,1] \) represent a grayscale image and \( S_n(f)(i,j) \) its deterministic or stochastic approximation.
\begin{itemize}
    \item \textbf{Max Error}:
    \[
    \max_{(i,j)\in (\mathbb{N}\times \mathbb{N})} \left| \mathfrak{S}_n(f)(i,j) - f(i,j) \right|
    \]
    \item \textbf{Mean Error}:
    \[
    \frac{1}{MN} \sum_{i=0}^{M-1} \sum_{j=0}^{N-1} \left| \mathfrak{S}_n(f)(i,j) - f(i,j) \right|
    \]
    \item \textbf{Structural Similarity Index Measure (SSIM)} 
\[
\text{SSIM}(f, \hat{f}) = \frac{(2\nu_f \nu_{\hat{f}} + C_1)(2\tau_{f\hat{f}} + C_2)}{(\nu_f^2 + \nu_{\hat{f}}^2 + C_1)(\tau_f^2 + \tau_{\hat{f}}^2 + C_2)},
\]
where \( \nu \), \( \tau \), and \( \tau_{f\hat{f}} \) are local means, standard deviations, and cross-covariance.
\item \textbf{Classical Definition:}
\[
\text{MAE}(f, \hat{f}) = \frac{1}{mn} \sum_{i=1}^m \sum_{j=1}^n \left| f_{i,j} - \hat{f}_{i,j} \right|.
\]
\item \textbf{ Peak Signal-to-Noise Ratio (PSNR):}
\[
\text{PSNR}(f, \hat{f}) = 10 \cdot \log_{10} \left( \frac{L^2}{\text{MSE}(f, \hat{f})} \right),
\]
where \( L \) is the maximum possible pixel value (e.g. \( L = 1 \) for normalized images), and
\[
\text{MSE}(f, \hat{f}) = \frac{1}{mn} \sum_{i=1}^m \sum_{j=1}^n (f_{i,j} - \hat{f}_{i,j})^2.
\]

\end{itemize}%

\subsection{Basic Probabilistic definition and Matrices }\label{M1}
\subsubsection{\textbf{Probabilistic Approach for arbitrary function}}
\begin{itemize}
\item \textbf{Notation and Preliminaries}
Let \( \mathfrak{D}\subset \mathbb{R}^n \) denote the compact and measurable domain with Lebesgue measurable with the function \(f\in  L^p(\mathfrak{D}) \) such that \( f: \mathfrak{D} \rightarrow \mathbb{R} \) ,  \( p \geq 1 \). Furthermore, if $\beth $ is a sample space such that for $\omega \in \beth $, let us have a random estimator \( \widehat{f}_n^\beth: \mathfrak{D} \rightarrow \mathbb{R} \) of a function $f,\, n\in \mathbb{N}$. In this paper, the estimator \( \widehat{f}_n^\beth \) is considered to be measurable in both \( x \in \mathfrak{D} \) and \( \omega \in \beth \), and for \( p \geq 1 \), it satisfies \( \widehat{f}_n^\beth \in L^p(\mathfrak{D} \times \beth) \). More preciously, the space $(\beth, \mathfrak{C}\mathfrak{P})$ denotes the probability space, where $\mathfrak{P}$ represents the probability measure in $\mathfrak{C}\subset \Omega$ which refers to a $\tau-$ algebra of events. Let $\mathfrak{E}$ be an expectation operator that corresponds to the probability measure. Mathematically, it can be expressed as follows;
\[
    \mathfrak{E}[g(\beth)] = \int_{\beth} g(\beth) \, d\mathbb{P}(\beth).
\]

    \item As in \cite{Def}, if $f(x)$ and $\widehat{f}_n^\beth(x)$ are the classical arbitrary and stochastic functions corresponding to the arbitrary function $f(x)\,$$\forall x \in \mathfrak{D}$. \( \operatorname{Var}(\cdot) \) and \( \operatorname{Cov}(\cdot, \cdot) \) denote the variance and covariance, receptively. Then for any fixed $x\in \mathfrak{D}$, let us denote  $\nu_f, 
        \nu_{\widehat{f}_n^\beth}, 
        \tau_f^2, \tau_{\widehat{f}_n^\beth}^2,$ and $ \tau_{f, \widehat{f}_n^\beth}$ the  as variances and covariances with respect to the $f(x)$ and $\widehat{f}_n^\beth(x)$, we have the following quantities as follows;


    \begin{align*}
        \nu_f &= \mathfrak{E}[f(x)], &
        \nu_{\widehat{f}_n^\beth} &= \mathfrak{E}[\widehat{f}_n^\beth(x)], \\
        \tau_f^2 &= \operatorname{Var}(f(x)), &
        \tau_{\widehat{f}_n^\beth}^2 &= \operatorname{Var}(\widehat{f}_n^\beth(x)), \\
        \tau_{f, \widehat{f}_n^\beth} &= \operatorname{Cov}(f(x), \widehat{f}_n^\beth(x)).
    \end{align*}

\end{itemize}
\begin{itemize}

    \item \textbf{Expected Mean Absolute Error (MAE)}:
    \[
    \mathfrak{E}\left[ \frac{1}{|\mathfrak{D}|} \int_{\mathfrak{D}} \left| \widehat{f}_n^\beth(x) - f(x) \right| dx \right],
    \]
    where \( \mathfrak{D} \) is the domain over which \( f \) and \( \widehat{f}_n^\beth \) are defined (e.g., \( [0,1]^q \)), and the expectation is over the randomness in the estimator.

    \item \textbf{Expected PSNR (Peak Signal-to-Noise Ratio)}:
    \[
    \mathfrak{E} \left[ 10 \log_{10} \left( \frac{M^2}{\mathfrak{E}[ ( \widehat{f}_n^\beth - f )^2 ]} \right) \right],
    \]
    where \( M \) is the maximum possible value (range) of the signal.

    \item \textbf{Expected SSIM (Structural Similarity Index)}:
    \[
    \mathfrak{E} \left[ \frac{(2 \nu_f \nu_{\widehat{f}_n^\beth} + C_1)(2 \tau_{f, \widehat{f}_n^\beth} + C_2)}{(\nu_f^2 + \nu_{\widehat{f}_n^\beth}^2 + C_1)(\tau_f^2 + \tau_{\widehat{f}_n^\beth}^2 + C_2)} \right],
    \]


    \item \textbf{Variance of absolute error}:
    \[
    \operatorname{Var}(|\widehat{f}_n^\beth - f|) = \mathfrak{E}\left[ |\widehat{f}_n^\beth - f|^2 \right] - \left( \mathfrak{E}\left[ |\widehat{f}_n^\beth - f| \right] \right)^2.
    \]
\end{itemize}

\subsubsection{\textbf{Probabilistic image error metrics}}
Again, let us assume  \( f(i,j) \in [0,1] \) represent a grayscale image and let \( f^\beth(x) \) be a random function (e.g. with additive noise) with $\mathfrak{P}_n^\beth(f)(i,j)$ be its reconstructed stochastic SK operators in terms of a grayscale image. Now, by these assumptions, we will define the following terms as follows:
\begin{itemize}
    \item \textbf{Expected Pixel-wise Error}:
    \[
    \mathfrak{E} \left[ \left| \mathfrak{P}_n^\beth(f)(i,j) - f(i,j) \right| \right]
    \]

    \item \textbf{Expected Mean Pixel-wise Error (over image)}: 
    \[
    \frac{1}{MN} \sum_{i=0}^{M-1} \sum_{j=0}^{N-1} \mathfrak{E} \left[ \left| \mathfrak{P}_n^\beth(f)(i,j) - f(i,j) \right| \right]
    \]
\end{itemize}

\begin{itemize}
    \item  \textbf{Mean Absolute Error (MAE):} The term MAE refers to the absolute measure between the reconstructed image and the original image, across the spatial domain \( \beth \subset \mathbb{R}^d \). In the context of an image, if  \( |\beth| \) denotes the cordiality of an image (image domain) and further, assume that  if \( \mathfrak{P}_n^\beth(f) \) is the reconstructed image corresponding to the original image $f$ , then  the following mathematical expression is given by the following expression as
\begin{equation*}
    \mathfrak{E}\left[\operatorname{MAE}(f, \mathfrak{P}_n^\beth(f))\right] = \mathfrak{E}\left[\frac{1}{|\beth|} \int_{\beth} \left|\mathfrak{P}_n^\beth(f)(x) - f(x)\right| \, dx\right],
\end{equation*}
where, \( \mathfrak{E}[\cdot] \) is expectation over all realizations of the noise.
      
\vskip0.15in
    \item \textbf{Expected Peak Signal-to-Noise Ratio (PSNR):}  The definition of classical PSNR is quit similar to the stochastic PSNR A logarithmic metric that quantifies the ratio between the maximum possible signal power (e.g., intensity range) and the power of the distortion (mean squared error). The expected PSNR is:
    \[
    \mathfrak{E}[\text{PSNR}] = \mathfrak{E} \left[ 10 \log_{10} \left( \frac{M^2}{\text{MSE}(f, \mathfrak{P}_n^\beth(f))} \right) \right],
    \]
    where \( M \) is the dynamic intensity range of the pixel values (e.g., \( M = 1 \) if normalized).
    \item \textbf{Expected SSIM (Structural Similarity Index):} SSIM measures image similarity in terms of structure, luminance, and contrast. Furthermore, assume that \( \nu_f \), \( \nu_{S_n} \) denote the local means of the original and reconstructed images, and \( \tau_f^2 \), \( \tau_{S_n}^2 \) (\( \tau_{f, S_n} \)) represent the local variances (local covariance) of the original and reconstructed images.  Then, the probabilistic expectation is:
    \[
    \mathfrak{E}[\text{SSIM}] = \mathfrak{E} \left[ \frac{(2 \nu_f \nu_{S_n} + C_1)(2 \tau_{f, S_n} + C_2)}{(\nu_f^2 + \nu_{S_n}^2 + C_1)(\tau_f^2 + \tau_{S_n}^2 + C_2)} \right],
    \]
    where, \( C_1, C_2 \) known as a stabilizer constants which is helpful to stabilize the division.
\vskip0.15in
    \item \textbf{Variance of Error:}  As the expression \( |\mathfrak{P}_n^\beth(f)(x) - f(x)| \) is considered as the point-wise absolute error.  The variance of error is defined as:
    \[
    \text{Var}(|\mathfrak{P}_n^\beth(f) - f|) = \mathfrak{E}\left[ |\mathfrak{P}_n^\beth(f) - f|^2 \right] - \left(\mathfrak{E}\left[ |\mathfrak{P}_n^\beth(f) - f| \right]\right)^2.
    \]
    \end{itemize}

\vskip0.15in



%
%


\section{Main results and Algorithms}
\subsection{Lemmas and Theorems:}
\begin{theorem} \label{TH1}
    
Let \( f \in L^1(\mathbb{R}^q) \), and let \( \mathfrak{S}_n(f) \) be the classical SK operators defined as \eqref{DSK}.

Then, the following convergence holds:
\[
\lim_{n \to \infty} \| \mathfrak{S}_n(f) - f \|_{L^1(\mathbb{R}^d)} = 0.
\]
\end{theorem}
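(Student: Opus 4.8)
The plan is to run the classical three-step scheme for $L^{1}$-convergence of Kantorovich-type sampling operators — uniform boundedness, reduction to a dense class, and an $\varepsilon/3$ splitting — adapted to the structure of $\mathfrak{S}_n$ in \eqref{DSK}.

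\emph{Step 1 (uniform boundedness on $L^1$).} I would first prove $\|\mathfrak{S}_n(f)\|_{L^1(\mathbb{R}^q)}\le \|\xi\|_{L^1(\mathbb{R}^q)}\,\|f\|_{L^1(\mathbb{R}^q)}$ for every $n$. From \eqref{DSK}, the triangle inequality and Tonelli's theorem, followed by the substitution $y=nx-k$ (which yields $\int_{\mathbb{R}^q}|\xi(nx-k)|\,dx=n^{-q}\|\xi\|_{L^1}$), one gets
\[
\|\mathfrak{S}_n(f)\|_{L^1}\le \|\xi\|_{L^1}\sum_{k\in\mathbb{Z}^q}\int_{R_k^n}|f(u)|\,du=\|\xi\|_{L^1}\|f\|_{L^1},
\]
since the cells $\{R_k^n\}_{k}$ partition $\mathbb{R}^q$ and $\|\xi\|_{L^1}<\infty$ because assumption (iii) gives $|\xi(x)|\le L(1+\|x\|)^{-q-\delta}$ with $q+\delta>q$. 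Along the way I would record the uniform estimate $\Lambda:=\sup_{x\in\mathbb{R}^q}\sum_{k\in\mathbb{Z}^q}|\xi(x-k)|<\infty$, which also follows from (iii).

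\emph{Steps 2--3 (density and convergence on $C_c$).} Fix $\varepsilon>0$ and choose $g\in C_c(\mathbb{R}^q)$ with $\|f-g\|_{L^1}<\varepsilon$. By linearity and Step 1,
\[
\|\mathfrak{S}_n(f)-f\|_{L^1}\le\|\mathfrak{S}_n(f-g)\|_{L^1}+\|\mathfrak{S}_n(g)-g\|_{L^1}+\|g-f\|_{L^1}\le(\|\xi\|_{L^1}+1)\,\varepsilon+\|\mathfrak{S}_n(g)-g\|_{L^1},
\]
so it suffices to show $\|\mathfrak{S}_n(g)-g\|_{L^1}\to0$ for $g$ continuous with compact support. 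Using the unit-summation property (i),
\[
\mathfrak{S}_n(g)(x)-g(x)=n^q\sum_{k\in\mathbb{Z}^q}\xi(nx-k)\int_{R_k^n}\bigl(g(u)-g(x)\bigr)\,du,
\]
and since $u\in R_k^n$ forces $\|u-x\|\le(\|nx-k\|+\sqrt q)/n$, this gives $|\mathfrak{S}_n(g)(x)-g(x)|\le\sum_k|\xi(nx-k)|\,\omega\!\bigl(g;(\|nx-k\|+\sqrt q)/n\bigr)$ with $\omega(g;\cdot)$ the uniform modulus of continuity. Splitting the sum into $\|nx-k\|\le\sqrt n$ and $\|nx-k\|>\sqrt n$: the first piece is $\le \Lambda\,\omega\!\bigl(g;(\sqrt n+\sqrt q)/n\bigr)\to0$ uniformly in $x$, while the second is $\le 2\|g\|_\infty\sum_{\|nx-k\|>\sqrt n}|\xi(nx-k)|$, a tail of the uniformly convergent series controlled by (iii), hence $\to0$ uniformly in $x$; thus $\|\mathfrak{S}_n(g)-g\|_\infty\to0$. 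To upgrade to $L^1$, I would note that for $R$ larger than the support radius of $g$ one has $\int_{\|x\|>R}|\mathfrak{S}_n(g)(x)|\,dx\le\|g\|_{L^1}\int_{\|y\|>R-r_0}|\xi(y)|\,dy$, which is small uniformly in $n\ge1$ by (iii); combining the uniform convergence on the ball $\{\|x\|\le R\}$ with this mass bound and $\int_{\|x\|>R}|g|=0$ yields $\|\mathfrak{S}_n(g)-g\|_{L^1}\to0$, and together with Steps 1--2 the theorem follows.

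\emph{Main obstacle.} The routine parts are Steps 1 and 2; the delicate point is the last upgrade in Step 3, namely ruling out escape of mass, i.e. showing $\int_{\|x\|>R}|\mathfrak{S}_n(g)(x)|\,dx$ is small uniformly in $n$ for large $R$. This is exactly where the quantitative decay rate $-q-\delta$ in (iii) is indispensable (a merely integrable kernel would not give a clean argument), and it is the estimate most likely to require care. An alternative that bypasses the tail issue altogether is to carry out Step 3 with the $L^{1}$-modulus of continuity $\omega_1(g;\delta):=\sup_{\|h\|\le\delta}\|g(\cdot+h)-g\|_{L^1}\to0$, estimating $\|\mathfrak{S}_n(g)-g\|_{L^1}$ directly via Tonelli together with a Riemann-sum comparison on the lattice $n^{-1}\mathbb{Z}^q$; I expect either route to close the argument, with the mass-escape / Riemann-sum comparison being the crux in both.
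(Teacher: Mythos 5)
Your proposed proof is correct. Note, however, that the paper itself does not prove this theorem: immediately after the statement it defers entirely to the cited reference (Bardaro--Vinti--Butzer--Stens, 2007), so there is no in-paper argument to compare against. Your self-contained proof follows exactly the standard scheme used in that literature -- the uniform bound $\|\mathfrak{S}_n(f)\|_{L^1}\le\|\xi\|_{L^1}\|f\|_{L^1}$ (which is in fact the content of the paper's Lemma~\ref{lemma}), reduction to $g\in C_c(\mathbb{R}^q)$ by density, the unit-summation identity to write $\mathfrak{S}_n(g)-g$ as a weighted average of increments of $g$, the split at $\|nx-k\|\le\sqrt n$ controlled by the modulus of continuity and the uniform tail bound from the decay condition (iii), and finally the no-escape-of-mass estimate $\int_{\|x\|>R}|\mathfrak{S}_n(g)|\le\|g\|_{L^1}\int_{\|y\|>R-r_0-\sqrt q}|\xi|$ to pass from uniform to $L^1$ convergence. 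All steps check out; your identification of the mass-escape estimate as the only delicate point is accurate, and your suggested alternative via the $L^1$-modulus of continuity is indeed the route some treatments take to avoid the two-stage upgrade. Either version closes the argument.
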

For the proof of the above theorem, the reader is requested to go through the article\cite{TH1}.
\begin{lemma}\label{lemma}
Let \( g \in L^1(\mathbb{R}^q) \). Then for SK operators \( \mathfrak{S}_n(g) \), there exists a constant \( L > 0 \), depending only on the kernel \( \xi \), such that
\[
\| \mathfrak{S}_n(g) \|_{L^1} \leq L \| g \|_{L^1},
\]
for all \( n \geq 1 \).
\end{lemma}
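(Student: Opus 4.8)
The plan is to establish the $L^1$-boundedness of $\mathfrak{S}_n$ directly from the decay estimate (iii) on the kernel, by bounding the $L^1$-norm of $\mathfrak{S}_n(g)$ pointwise, interchanging sum and integral via Tonelli's theorem, and then controlling the resulting kernel sum uniformly in $n$. First I would write, for $g \in L^1(\mathbb{R}^q)$,
\[
\|\mathfrak{S}_n(g)\|_{L^1} = \int_{\mathbb{R}^q} \left| n^q \sum_{k \in \mathbb{Z}^q} \xi(nx-k) \int_{D_k^n} g(u)\,du \right| dx \le n^q \sum_{k \in \mathbb{Z}^q} \int_{\mathbb{R}^q} |\xi(nx-k)|\,dx \int_{D_k^n} |g(u)|\,du,
\]
where the interchange of the (countable) sum and the integral is justified by Tonelli's theorem since every term is nonnegative. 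For each fixed $k$, the substitution $y = nx - k$ gives $\int_{\mathbb{R}^q} |\xi(nx-k)|\,dx = n^{-q}\|\xi\|_{L^1}$, and property (iii) guarantees $\xi \in L^1(\mathbb{R}^q)$ because $(1+\|x\|)^{-q-\delta}$ is integrable on $\mathbb{R}^q$ for $\delta>0$; set $M_0 := \|\xi\|_{L^1} < \infty$.

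Substituting back, the $n^q$ and $n^{-q}$ cancel and one obtains
\[
\|\mathfrak{S}_n(g)\|_{L^1} \le M_0 \sum_{k \in \mathbb{Z}^q} \int_{D_k^n} |g(u)|\,du = M_0 \int_{\mathbb{R}^q} |g(u)|\,du = M_0 \|g\|_{L^1},
\]
where the crucial equality $\sum_{k}\int_{D_k^n}|g| = \int_{\mathbb{R}^q}|g|$ holds because the cells $\{D_k^n\}_{k \in \mathbb{Z}^q}$ form a partition of $\mathbb{R}^q$ for every fixed $n$. Taking $L := M_0$, which depends only on the kernel $\xi$ and not on $n$ or $g$, completes the argument, and the bound holds for all $n \ge 1$.

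The main obstacle, such as it is, is purely a matter of bookkeeping rather than depth: one must be careful that the decay hypothesis (iii) is genuinely what delivers $\xi \in L^1$ (hypothesis (ii), boundedness, alone is not enough), and one must verify that the half-open cells $D_k^n$ really do tile $\mathbb{R}^q$ without overlap so that the last summation is an honest decomposition of $\int |g|$. A secondary point worth stating explicitly is the measurability of $(x,u) \mapsto \xi(nx-k)\,\mathbf{1}_{D_k^n}(u)g(u)$, needed to invoke Tonelli; this follows since $\xi$ is measurable (being bounded and satisfying a pointwise bound) and $g$ is integrable. No cancellation or oscillation of $\xi$ is exploited — only its absolute integrability — so the estimate is robust and the constant is explicit.
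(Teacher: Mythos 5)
Your proposal is correct and follows essentially the same route as the paper's proof: interchange the sum and integral, substitute $w = nx-k$ to get $\int |\xi(nx-k)|\,dx = n^{-q}\|\xi\|_{L^1}$, cancel the $n^q$ factor, and use the fact that the cells partition $\mathbb{R}^q$ to recover $\|g\|_{L^1}$, with $L = \|\xi\|_{L^1}$. Your write-up is in fact a bit more careful than the paper's, since you explicitly justify the interchange by Tonelli and note that hypothesis (iii) is what guarantees $\xi \in L^1$.
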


\begin{proof}
The classical SK operators are defined by
\[
\mathfrak{S}_n(g)(x) = n^q \sum_{k \in \mathbb{Z}^q} \xi(n x - k) \int_{R_a^n} g(u) \, du,
\]
where \[
D_a^n = \prod_{j=1}^{q} \left[\frac{a_j}{n}, \frac{a_j + 1}{n}\right),
\].

Imposing the \( L^1 \)-norm, we obtain
\begin{eqnarray*}
    \| \mathfrak{S}_n(g) \|_{L^1} &=& \int_{\mathbb{R}^q} \left| n^q \sum_{k \in \mathbb{Z}^q} \xi(n x - k) \int_{D_a^n} g(u) \, du \right| \, dx \\
    &\leq & n^q \sum_{k \in \mathbb{Z}^q} \left( \int_{\mathbb{R}^q} |\xi(n x - k)| \, dx \right) \left( \int_{R_a^n} |g(u)| \, du \right).
\end{eqnarray*}

Assume \( w = n x - k \)  \(\iff \, dw = n^q \, dx \). Thus we have
\[
\int_{\mathbb{R}^q} |\xi(n x - k)| \, dx = \frac{1}{n^d} \int_{\mathbb{R}^q} |\xi(w)| \, dw = \frac{1}{n^w} \|\xi\|_{L^1}.
\]

Substituting, we obtain
\[
\| \mathfrak{S}_n(g) \|_{L^1} \leq \|\xi\|_{L^1} \sum_{k \in \mathbb{Z}^q} \int_{R_k^n} |g(u)| \, du.
\]

Since \( \{R_k^n\} \) forms a partition of \( \mathbb{R}^q \), we have
\[
\sum_{k \in \mathbb{Z}^q} \int_{R_k^n} |g(u)| \, du = \int_{\mathbb{R}^q} |g(u)| \, du.
\]

Thus,
\[
\| \mathfrak{S}_n(g) \|_{L^1} \leq \|\xi\|_{L^1} \| g \|_{L^1},
\]
 setting \( L = \|\xi\|_{L^1} \), thus we have the required results.
\end{proof}
\begin{lemma}
Let \( f \in L^1(\mathbb{R}^q) \) and further let \( \varepsilon_\beth \) be a random perturbation satisfying
\[
\sup_\beth \int_{\mathbb{R}^q} |\varepsilon_\beth(x)| \, dx < \infty.
\]
The PSK-operators are defined as in \eqref{PSK}. 
There exists a constant \( L > 0 \) (depending on the kernel) \( \xi \), such that
\[
\mathfrak{E}\left[ \| \mathfrak{P}_n^\beth(f) \|_{L^1} \right] \leq L \left( \| f \|_{L^1} + \mathfrak{E}\left[ \| \varepsilon_\beth \|_{L^1} \right] \right),
\]
for all \( n \geq 1 \).
\end{lemma}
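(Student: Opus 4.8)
The plan is to reduce the statement to the deterministic bound of Lemma~\ref{lemma} by exploiting the linearity of the classical SK operator in its argument. First I would use the definition \eqref{PSK} together with $f_\beth = f + \varepsilon_\beth$ to write, for a.e. $x$ and every realization $\omega \in \beth$,
\[
\mathfrak{P}_n^\beth(f)(x) = \mathfrak{S}_n(f_\beth)(x) = \mathfrak{S}_n(f)(x) + \mathfrak{S}_n(\varepsilon_\beth)(x),
\]
which is legitimate because $\mathfrak{S}_n$ acts linearly through the cell integrals $\int_{D_a^n}(\cdot)\,du$ and through the absolutely convergent sum against $\xi(nx-k)$. Applying the triangle inequality in $L^1(\mathbb{R}^q)$ then gives, pathwise in $\omega$,
\[
\| \mathfrak{P}_n^\beth(f) \|_{L^1} \le \| \mathfrak{S}_n(f) \|_{L^1} + \| \mathfrak{S}_n(\varepsilon_\beth) \|_{L^1}.
\]

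Next I would invoke Lemma~\ref{lemma} twice with the same kernel constant $L = \|\xi\|_{L^1}$: once with $g = f$ to obtain $\| \mathfrak{S}_n(f) \|_{L^1} \le L \| f \|_{L^1}$, and once, for each fixed $\omega$, with $g = \varepsilon_\beth(\cdot,\omega)$ to obtain $\| \mathfrak{S}_n(\varepsilon_\beth) \|_{L^1} \le L \| \varepsilon_\beth \|_{L^1}$. The hypothesis $\sup_\beth \int_{\mathbb{R}^q} |\varepsilon_\beth(x)|\,dx < \infty$ guarantees $\varepsilon_\beth(\cdot,\omega) \in L^1(\mathbb{R}^q)$ for (a.e.) $\omega$, so the lemma genuinely applies to each path. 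Combining the two estimates yields the pathwise bound
\[
\| \mathfrak{P}_n^\beth(f) \|_{L^1} \le L \bigl( \| f \|_{L^1} + \| \varepsilon_\beth \|_{L^1} \bigr),
\]
and then taking the expectation $\mathfrak{E}[\cdot]$ of both sides and using its monotonicity and linearity (with $\| f \|_{L^1}$ deterministic) gives the asserted inequality, uniformly in $n \ge 1$.

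The step that needs genuine care — and which I expect to be the main obstacle — is the measurability and integrability bookkeeping that legitimizes "take $\mathfrak{E}$ of both sides." One must verify that $\omega \mapsto \| \mathfrak{S}_n(\varepsilon_\beth(\cdot,\omega)) \|_{L^1}$ is a bona fide random variable; this follows from the joint measurability of $\varepsilon_\beth$ in $(x,\omega)$ assumed in Section~\ref{M1}, combined with Tonelli's theorem applied to the nonnegative integrand $|\mathfrak{S}_n(\varepsilon_\beth)(x,\omega)|$, which additionally identifies $\mathfrak{E}\bigl[ \| \mathfrak{S}_n(\varepsilon_\beth) \|_{L^1} \bigr] = \int_{\mathbb{R}^q} \mathfrak{E}\bigl[ |\mathfrak{S}_n(\varepsilon_\beth)(x)| \bigr]\,dx$. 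Finiteness of $\mathfrak{E}\bigl[ \| \varepsilon_\beth \|_{L^1} \bigr]$ is then immediate, since the uniform bound $\sup_\beth \int_{\mathbb{R}^q} |\varepsilon_\beth(x)|\,dx$ dominates it, so no term on either side is infinite and the estimate is non-vacuous. Everything else is a direct transcription of the computation carried out in the proof of Lemma~\ref{lemma}.
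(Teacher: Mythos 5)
Your proposal is correct and follows essentially the same route as the paper's proof: linearity of $\mathfrak{S}_n$ to split $\mathfrak{P}_n^\beth(f)=\mathfrak{S}_n(f)+\mathfrak{S}_n(\varepsilon_\beth)$, the triangle inequality in $L^1$, two applications of Lemma~\ref{lemma} with $L=\|\xi\|_{L^1}$, and then taking expectations. Your additional attention to the measurability of $\omega\mapsto\|\mathfrak{S}_n(\varepsilon_\beth)\|_{L^1}$ via Tonelli is a welcome refinement of a point the paper leaves implicit, but it does not change the argument.
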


\begin{proof} Obviously from \eqref{PSK}, the relation between $f(x)$  and perturbation $ \varepsilon_\beth(x)$ is written as follows;
\[
\quad f_\beth(x) = f(x) + \varepsilon_\beth(x),
\]
First, observe that by linearity of \( \mathfrak{S}_n \), we have
\[
\mathfrak{P}_n^\beth(f)(x) = \mathfrak{S}_n(f)(x) + \mathfrak{S}_n(\varepsilon_\beth)(x).
\]

Again, imposing the triangle inequality with respect to \( L^1 \),
\[
\| \mathfrak{P}_n^\beth(f) \|_{L^1} \leq \| \mathfrak{S}_n(f) \|_{L^1} + \| \mathfrak{S}_n(\varepsilon_\beth) \|_{L^1}.
\]

From the previous lemma \eqref{lemma}, we obtain
\[
\| \mathfrak{S}_n(f) \|_{L^1} \leq  L\| f \|_{L^1},
\quad \text{and} \quad
\| \mathfrak{S}_n(\varepsilon_\beth) \|_{L^1} \leq L \| \varepsilon_\beth \|_{L^1},
\]
where \( L = \|\xi\|_{L^1} \) is not dependent on \( n \) and \( \beth \).

Thus,
\[
\| \mathfrak{P}_n^\beth(f) \|_{L^1} \leq L \left( \| f \|_{L^1} + \| \varepsilon_\beth \|_{L^1} \right).
\]

Eventually, operating the expectations on both sides, we have
\[
\mathfrak{E}\left[ \| \mathfrak{P}_n^\beth(f) \|_{L^1} \right] \leq L \left( \| f \|_{L^1} + \mathfrak{E}\left[ \| \varepsilon_\beth \|_{L^1} \right] \right).
\]

By assumption \( \sup_\beth \| \varepsilon_\beth \|_{L^1} < \infty \), the result proved.
\end{proof}

\begin{theorem} \textbf{(Fundamental theorem of approximation)}
    Suppose \( f \in L^1(\mathbb{R}^q) \) and further let us denote the noise process by \( \varepsilon_\beth \) satisfies;
\[
\sup_\beth \int_{\mathbb{R}^d} |\varepsilon_\beth(x)| \, dx < \infty.
\]
Then, from \eqref{PSK} and under the assumption of uniform integrability, we have
\[
\mathfrak{E}\left[ \| \mathfrak{P}_n^\beth(f) - f \|_{L^1} \right] \to 0 \quad \text{as } n \to \infty.
\]
\end{theorem}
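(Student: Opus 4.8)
The plan is to split the error $\mathfrak{P}_n^\beth(f) - f$ using the linearity observed in the previous lemma, namely $\mathfrak{P}_n^\beth(f)(x) = \mathfrak{S}_n(f)(x) + \mathfrak{S}_n(\varepsilon_\beth)(x)$, so that by the triangle inequality in $L^1$,
\[
\| \mathfrak{P}_n^\beth(f) - f \|_{L^1} \leq \| \mathfrak{S}_n(f) - f \|_{L^1} + \| \mathfrak{S}_n(\varepsilon_\beth) \|_{L^1}.
\]
Taking expectations, the first term is deterministic and tends to $0$ by Theorem~\ref{TH1}, while the second is controlled, via Lemma~\ref{lemma}, by $L\,\mathfrak{E}[\|\varepsilon_\beth\|_{L^1}] \leq L \sup_\beth \|\varepsilon_\beth\|_{L^1} < \infty$. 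This immediately gives a uniform bound but \emph{not} convergence to zero for the noise term; that is where the uniform integrability hypothesis must do real work.

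First I would make the decomposition precise and justify passing the expectation through the $L^1$-norm (Tonelli, since everything is nonnegative and measurable in $(x,\omega)$ by the standing assumptions in Section~\ref{M0}). Next I would dispatch the $\| \mathfrak{S}_n(f) - f \|_{L^1} \to 0$ term by citing Theorem~\ref{TH1} directly. The substantive step is to show $\mathfrak{E}[\|\mathfrak{S}_n(\varepsilon_\beth)\|_{L^1}] \to 0$. Here the bound from Lemma~\ref{lemma} alone is insufficient, so I would instead argue that the noise, being of mean zero, should contribute a vanishing \emph{expected} $L^1$-error: the idea is to interchange $\mathfrak{E}$ and $\mathfrak{S}_n$ (the operator acts on the spatial variable, the expectation on $\omega$, and they commute by Tonelli/Fubini given the integrability hypothesis), so that $\mathfrak{E}[\mathfrak{S}_n(\varepsilon_\beth)(x)] = \mathfrak{S}_n(\mathfrak{E}[\varepsilon_\beth])(x) = \mathfrak{S}_n(0)(x) = 0$ pointwise, and then control $\mathfrak{E}\|\mathfrak{S}_n(\varepsilon_\beth)\|_{L^1}$ by combining this with the uniform integrability of the family $\{\mathfrak{S}_n(\varepsilon_\beth)\}_{n}$, which follows from the operator bound and the hypothesis $\sup_\beth \|\varepsilon_\beth\|_{L^1} < \infty$.

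I expect the main obstacle to be exactly this last point: uniform boundedness of $\mathfrak{E}\|\mathfrak{S}_n(\varepsilon_\beth)\|_{L^1}$ does not by itself force it to zero, and the naive ``$\mathfrak{E}[\varepsilon_\beth] = 0$'' argument only kills the expectation of the (signed) operator output, not the expectation of its absolute value — indeed $\mathfrak{E}|\mathfrak{S}_n(\varepsilon_\beth)|$ need not vanish at all for fixed $n$. To close the gap honestly one needs the uniform integrability assumption to be read as a genuine hypothesis that the family $\{\|\mathfrak{P}_n^\beth(f) - f\|_{L^1}\}_n$ (equivalently $\{\|\mathfrak{S}_n(\varepsilon_\beth)\|_{L^1}\}_n$) is uniformly integrable, so that pointwise/almost-sure or in-probability convergence to $0$ of the error — obtained from Theorem~\ref{TH1} on a suitable subsequence together with the perturbation tending to $0$ in the relevant mode — upgrades to convergence in expectation via Vitali's convergence theorem. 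So the proof structure I would present is: (i) linear decomposition and Tonelli; (ii) deterministic term $\to 0$ by Theorem~\ref{TH1}; (iii) establish convergence to $0$ in probability (or a.s.) of the stochastic error term using the mean-zero structure and the operator norm bound; (iv) invoke uniform integrability plus Vitali to conclude $\mathfrak{E}[\|\mathfrak{P}_n^\beth(f) - f\|_{L^1}] \to 0$.
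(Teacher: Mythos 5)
Your decomposition, the use of the triangle inequality, the appeal to Theorem~\ref{TH1} for the deterministic part, and the operator bound from Lemma~\ref{lemma} for the noise part are exactly the paper's steps. The difference is that you stop and flag the remaining step as a genuine problem, whereas the paper simply asserts that ``by the assumption of uniform integrability and possibly additional assumptions (particularly $\varepsilon_\beth \to 0$ in $L^1$)'' the noise term vanishes. Your diagnosis is correct: Lemma~\ref{lemma} only yields $\mathfrak{E}\bigl[\|\mathfrak{S}_n(\varepsilon_\beth)\|_{L^1}\bigr] \leq L\,\mathfrak{E}\bigl[\|\varepsilon_\beth\|_{L^1}\bigr]$, a bound that is uniform in $n$ but has no reason to tend to zero, and the mean-zero observation is irrelevant because the quantity inside the expectation is $\|\cdot\|_{L^1}$ of the operator output, not the operator output itself.

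Be aware, though, that your proposed repair in step (iii) cannot be carried out under the stated hypotheses either. The noise process $\varepsilon_\beth$ in the theorem does not depend on $n$, so for each fixed realization with $\varepsilon_\beth(\cdot,\omega) \in L^1(\mathbb{R}^q)$, Theorem~\ref{TH1} applied to $\varepsilon_\beth$ itself gives $\|\mathfrak{S}_n(\varepsilon_\beth)\|_{L^1} \to \|\varepsilon_\beth\|_{L^1}$, which is strictly positive whenever the noise is not identically zero; there is no convergence to $0$ in probability or almost surely for Vitali to upgrade. The statement is simply not provable from the hypotheses as written: one must additionally assume that the noise vanishes along the sequence (e.g.\ $\varepsilon_\beth = \varepsilon_\beth^{(n)}$ with $\mathfrak{E}\bigl[\|\varepsilon_\beth^{(n)}\|_{L^1}\bigr] \to 0$), at which point the conclusion follows directly from the two bounds already in hand and no uniform-integrability or Vitali argument is needed. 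So your write-up is more honest than the paper's, but the missing hypothesis, not a missing argument, is the real obstruction for both.
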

\begin{proof}
 From \eqref{PSK}, we have
\[
\mathfrak{P}_n^\beth(f)(x) = \mathfrak{S}_n(f)(x) + \mathfrak{S}_n(\varepsilon_\beth)(x).
\]

Using linearity property over the expectation operators corresponding to \( L^1 \), we have the following estimation:
\[
\mathfrak{E}\left[ \| \mathfrak{P}_n^\beth(f) - f \|_{L^1} \right]
\leq \| \mathfrak{S}_n(f) - f \|_{L^1} + \mathfrak{E}\left[ \| \mathfrak{S}_n(\varepsilon_\beth) \|_{L^1} \right].
\]

From the classical fundamental theorem with respect to \eqref{TH1} for \( \mathfrak{S}_n(f) \), we know:
\[
\| \mathfrak{S}_n(f) - f \|_{L^1} \to 0 \quad \text{as } n \to \infty.
\] Since \( \varepsilon_\beth \in L^1 \) is uniformly bounded in the expectation sense:
\[
\| \mathfrak{S}_n(\varepsilon_\beth) \|_{L^1} \leq L \int_{\mathbb{R}^q} |\varepsilon_\beth(u)| \, du,
\]
for \( L > 0 \) not dependent on \( \beth \) and \( n \).

Therefore,
\[
\mathfrak{E}\left[ \| \mathfrak{P}_n(\varepsilon_\beth) \|_{L^1} \right] \leq L \, \mathfrak{E}\left[ \int_{\mathbb{R}^q} |\varepsilon_\beth(u)| \, du \right].
\]

By assumption,
\[
\sup_\beth \int_{\mathbb{R}^q} |\varepsilon_\beth(u)| \, du < \infty \quad \Rightarrow \quad \mathfrak{E}\left[ \int_{\mathbb{R}^q} |\varepsilon_\beth(u)| \, du \right] < \infty.
\]
Eventually, by the assumption of uniform integrability and possibly additional assumptions (particularly \( \varepsilon_\beth \to 0 \) in \( L^1 \)), we can conclude:
\[
\lim_{n \to \infty} \mathfrak{E}\left[ \| \mathfrak{P}_n(\varepsilon_\beth) \|_{L^1} \right] = 0.
\]
Thus, we have
\[
\mathfrak{E}\left[ \| \mathfrak{P}_n^\beth(f) - f \|_{L^1} \right] \to 0 \quad \text{as } n \to \infty.
\]

\end{proof}

\subsection{Algorithm 1: Approximating $f(x) = e^{-x^2}$ using Classical and Probabilistic SK-operators} \label{algo}
\vskip 0.15in
\textbf{Input:} Function $f(x) = e^{-x^2}$, domain $x \in [-3, 3]$, number of points $=1000$, $n$-values $= \{5, 15, 25, 35, 45\}$ \\
\textbf{Output:} Approximated functions $\mathfrak{S}_n f(x)$ for Classical and Probabilistic SK operators

\vskip 0.15in
\textbf{Define:} $f(x) = e^{-x^2}$, $x = \text{linspace}(-3, 3, 1000)$

\vskip 0.1in
\textbf{Kernel Function:} For  SK, use the indicator kernel
\[
\xi_{[x_k, x_k + h)}(x) = 
\begin{cases}
1, & \text{if } x \in [x_k, x_k + h) \\
0, & \text{otherwise}
\end{cases}
\]

\vskip 0.1in
\textbf{For each} $n \in \{5, 15, 25, 35, 45\}$ \textbf{do:}
\vskip 0.15in
\hrule
\vskip 0.15in
\textbf{Classical SK Operators:}
\begin{itemize}
    \item Set $h = \frac{1}{n}$
    \item Initialize $\mathfrak{S}_n f(x) = 0$ over all $x$
    \item \textbf{For} $k = -3n$ \textbf{to} $3n$:
    \begin{itemize}
        \item Compute $x_k = k \cdot h$
        \item Compute ${Kr}_k = \frac{1}{h} \int_{x_k}^{x_k + h} f(u)\,du$
        \item \textbf{For each} $x_i$:
        \begin{itemize}
            \item \textbf{If} $x_i \in [x_k, x_k + h)$ \textbf{then} set $\mathfrak{S}_n f(x_i) = Kr_k$
        \end{itemize}
    \end{itemize}
\end{itemize}
\vskip 0.15in
\hrule
\vskip 0.15in
\vskip 0.15in
  
\textbf{Probabilistic SK Operators:}
\begin{itemize}
    \item Set $h = \frac{1}{n}$
    \item Initialize $\mathfrak{P}_n f(x) = 0$ over all $x$
    \item Initialize Gaussian noise generator ($\varepsilon_\beth$)  (mean = 0, std = 0.02)
    \item \textbf{For} $k = -3n$ \textbf{to} $3n$:
    \begin{itemize}
        \item Compute $x_k = k \cdot h$
        \item Compute $Kr_k = \frac{1}{h} \int_{x_k}^{x_k + h} f(u)\,du$
        \item Sample perturbation $\delta \sim \mathcal{N}(0, 0.02^2)$
        \item \textbf{For each} $x_i$:
        \begin{itemize}
            \item \textbf{If} $x_i \in [x_k, x_k + h)$ \textbf{then} set $\mathfrak{P}_n f(x_i) = Kr_k + \delta$
        \end{itemize}
    \end{itemize}
\end{itemize}

\vskip 0.15in
\hrule
\vskip 0.15in
\textbf{Plotting:}
\begin{itemize}
    \item Plot original function $f(x) = e^{-x^2}$
    \item \textbf{For each} $n$:
    \begin{itemize}
        \item Plot Classical SK: $\mathfrak{S}_n f(x)$
        \item Plot Probabilistic SK: $\mathfrak{P}_n f(x)$ with noise
    \end{itemize}
\end{itemize}
\vskip 0.15in
\hrule
\vskip 0.15in
\section{Numerical Illustration}
\begin{example}
    We consider the function:
\[
f(x) = e^{-x^2}, \quad x \in \mathbb{R}
\]


The classical SK-operators in one dimension which is defined as:
\[
\mathfrak{S}_n(f, x) = \sum_{k \in \mathbb{Z}} \left( n \int_{\frac{k}{n}}^{\frac{k+1}{n}} f(u) \, du \right) \cdot \varphi(n x - k)
\]
where:\\
Let \( f: \mathbb{R} \to \mathbb{R} \) be a locally integrable function, and \( n \in \mathbb{N} \) a sampling density. Define the subintervals

\[
I_k = \left[ \frac{k}{n}, \frac{k+1}{n} \right), \quad \text{for } k \in \mathbb{Z}.
\]
and \( \varphi \) is a suitable kernel function. Particularly, here \[
\varphi_{I_k}(x) = 
\begin{cases}
1, & \text{if } x \in I_k, \\
0, & \text{otherwise}.
\end{cases}
\]
where \( \varphi_{I_k}(x) \) is the characteristic function of the interval \( I_k \).



\vskip.15in


\bigskip

The PSK operators Kantorovich operators with perturbation introduces randomness into the average as:
\[
\mathfrak{P}_n(f, x) = \sum_{k \in \mathbb{Z}} \left( n \int_{I_k} f(u)\,du + \varepsilon_k \right) \cdot \varphi_{I_k}(x),
\]
where particularity, \( \varepsilon_k \) is a random variable of real value as \( \varepsilon_k \sim \mathcal{N}(0, \tau^2) \) representing noise added to the local average over each interval \( I_k \).
\vskip.15in
Now, we give the numerical data in the following table, where we compare the classical and probabilistic SK- operators at sample level as;
\begin{table}[ht]
\centering
\begin{tabular}{|c|c|c|}
\hline
\textbf{Sampling Level} \( n \) & \textbf{Classical SK Error} & \textbf{Probabilistic SK Error} \\
\hline
5  & 0.086 & 0.091 \\
15 & 0.043 & 0.048 \\
25 & 0.021 & 0.027 \\
35 & 0.010 & 0.018 \\
\hline
\end{tabular}
\vskip.15in
\caption{Comparison of $L^1$-errors between Classical and Probabilistic SK operators for various sampling levels \( n \).}
\end{table}

\begin{figure}
    \centering
   \includegraphics[width=1\textwidth]{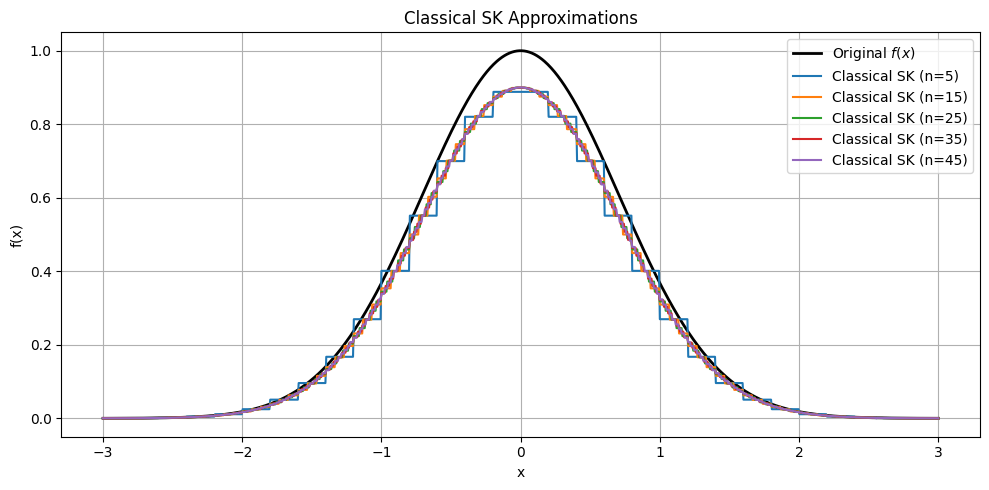}
     \caption{Approximation of \( f(x) = e^{-x^2} \) using classical SK-Operator for various \( n \).}
    \label{fig:fejer_results}
\end{figure}
\begin{figure}
    \centering
    \includegraphics[width=1\textwidth]{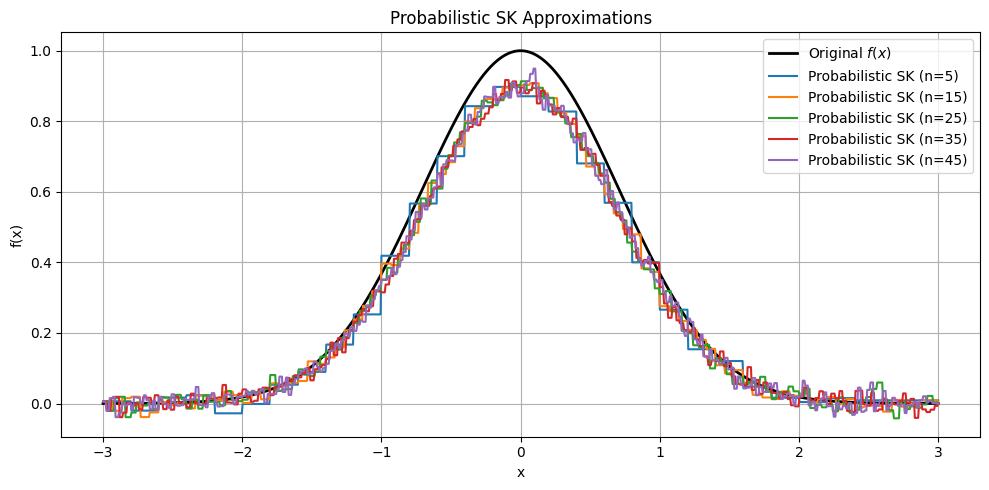}
        \caption{Approximation of \( f(x) = e^{-x^2} \) using PSK Operators for different \( n \).}
    \label{fig:fejer_results}
\end{figure}
\vskip0.15in
From the above table, we conclude that classical sampling operators work well whereas the PSK-sampling operators are bit less efficient. Both the operators are implemented under the ideal condition (no randomness/noise). Preciously, the classical convergence is faster than the convergence of classical SK-operators under $L^1(\mathbb{R})$.\par
\end{example}
\newpage

In the real-life situation problems, we construct the behavior of randomness or noisy image (non-ideal situation) for both the operators.
 In the following problem, let us do a detailed analysis;

\begin{example} The operators in classical and stochastic operators defined in equations \eqref{DSK} and \eqref{PSK}, respectively, and a discrete function $f$ in the form of image, specifically the widely-used cameraman test image is used to approximate. The scheme of the operators is implemented over Python according to the algorithm \eqref{algo}.\par
\vskip.15in
We investigated the various aspects of the image in the forms of mathematical parameters such as PSNR, SSIM, and MAE at various levels of samples. In this work, particularly in the context of image, we construct the $ n\times n $ window using sample $n$ with the SK operators as shown in Table 3.\par
\vskip.15in
On the other hand, while dealing with the PSK operators, we find out the approximation parameters like PSNR, SSIM, and MAE with in  expectation operators form. Similarly, we perform the algorithm \eqref{algo} for PSK-operators to obtain the nature of images at various levels.
\vskip.15in
    \begin{itemize}
  \item \textbf{Mathematical parameters with respect to the classical SK-operators }
  \end{itemize}
  \vskip.15in
    As shown in the figure $3$, it is clearly shown the cameraman photo at various windows. The images gets more smooth as we increase the window size and from the practical point of view error refers the detection of image at a certain window which can be seen in Figure $3$
\vskip.15in
The following table shows the data corresponding to the SK- operators as; 
\begin{table}[h!]
\centering
\begin{tabular}{|c|c|c|c|c|}
\hline
\textbf{Window Size} & $\text{PSNR}$ & $\text{SSIM}$ & $\text{MAE}$ & $\text{Var}(|\mathfrak{S}_n(f)-f|)$ \\
\hline
$3 \times 3$   & 29.45 & 0.8590 & 0.0174 & 0.00083128 \\
$7 \times 7$   & 25.10 & 0.7185 & 0.0286 & 0.00227373 \\
$15 \times 15$ & 22.21 & 0.6155 & 0.0402 & 0.00439862 \\
\hline
\end{tabular}
\vskip0.15in
\caption{Quantitative evaluation of SK approximation with different window sizes.}
\end{table}

\newpage

   \begin{figure}
		\includegraphics[width=1.1\linewidth]{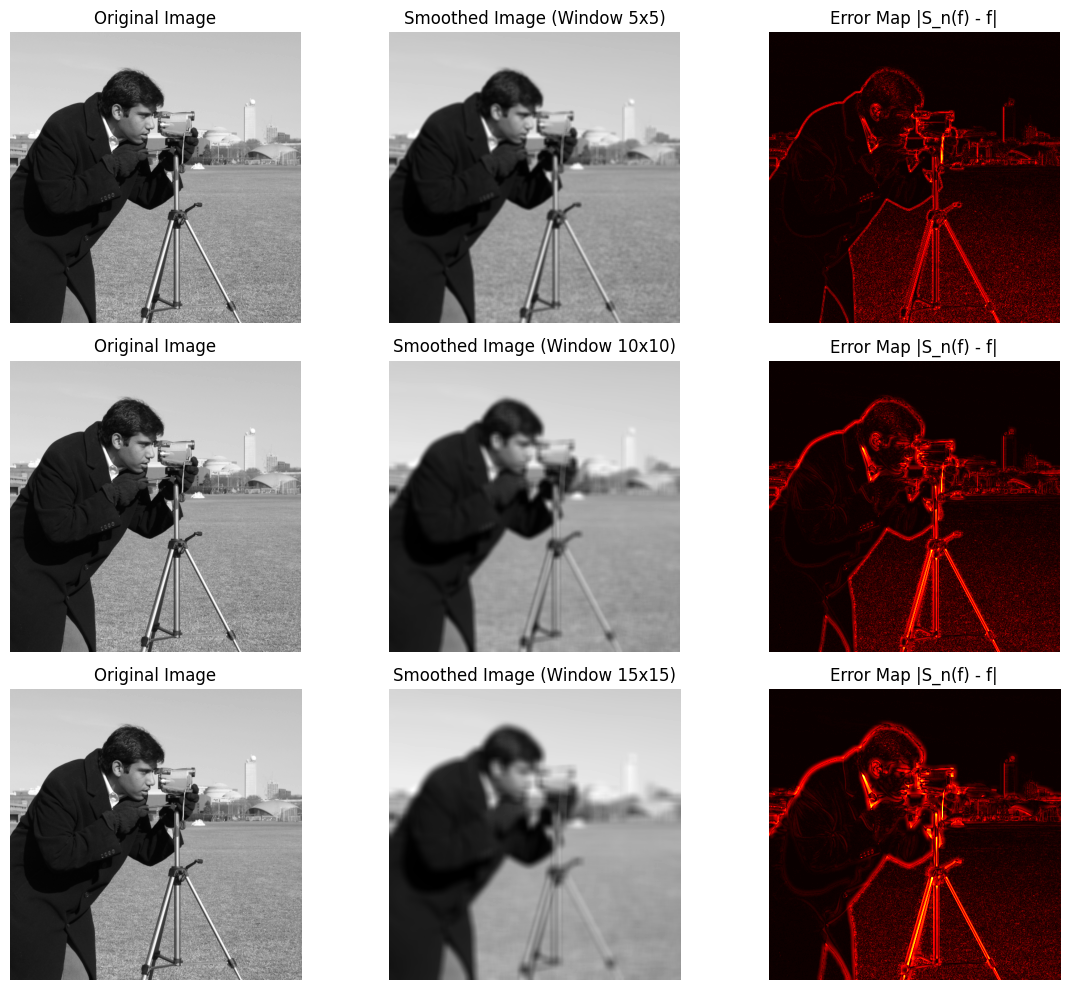}
		\caption{SK-operators}
        
	\end{figure}
    \begin{figure}
		\includegraphics[width=1.1\linewidth]{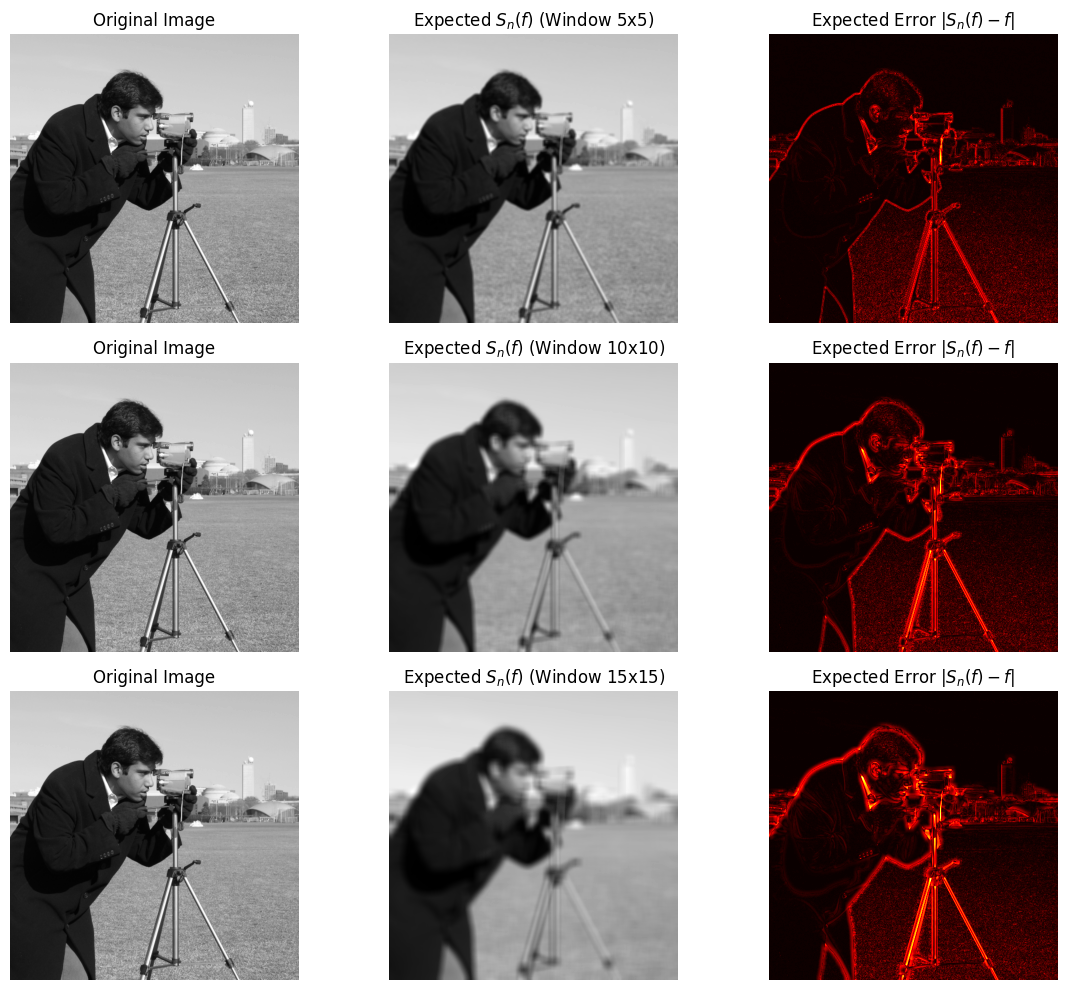}
		\caption{SK-operators in probabilistic way}
	\end{figure}

\begin{itemize}
\newpage
 \item \textbf{Mathematical parameters with respect to the PSK-operators}

    \begin{table}[h!]
\centering
\begin{tabular}{|c|c|c|c|c|}
\hline

\textbf{Window Size} & $\mathfrak{E}[\text{PSNR}]$ & $\mathfrak{E}[\text{SSIM}]$ & $\mathfrak{E}[\text{MAE}]$ & $\text{Var}(|\mathfrak{S}_n(f)-f|)$ \\
\hline
$3 \times 3$   & 28.48 & 0.8421 & 0.0305 & 0.000129 \\
$7 \times 7$   & 28.98 & 0.8797 & 0.0274 & 0.000081 \\
$15 \times 15$ & 27.86 & 0.8446 & 0.0283 & 0.000047 \\
\hline
\end{tabular}
\caption{Quality metrics (Expected PSNR, SSIM, MAE, and Variance) for expected \( S_n(f) \) on a grayscale image with different window sizes.}
\end{table}
\vskip.15in
\vskip.15in
\vskip.15in
\vskip.15in
\vskip.15in
\vskip.15in
\vskip.15in
\vskip.15in
\vskip.15in
\vskip.15in
\vskip.15in
\vskip.15in

\newpage

\begin{table}[h!]
\centering
\item \textbf{Tabulated Comparison of Application Insights} \\[0.3cm]
\begin{tabular}{|p{3.5cm}|p{5.5cm}|p{5.5cm}|}
\hline
\textbf{Key features} & \textbf{Classical Interpretation} & \textbf{Probabilistic Interpretation} \\
\hline
\textbf{Nature of $f(x)$} & Image \( f(x) \) & Noisy version \( f^\beth(x) = f(x) + \eta(x,\beth) \) \\
\hline
\textbf{Operators Type} & Standard classical SK-operators \( \mathfrak{S}_n(f) \) & Noisy PSK-operators \( \mathfrak{P}_n^\beth(f) \) with expectation \\
\hline
\textbf{Stability Nature} & Stable under ordinary conditions, but sensitive to noise & Robust under randomness, improves with group averaging \\
\hline
\textbf{Error Analysis} & Point-wise and \( L^1 \)-error, max and min classical errors & Probabilistic errors: \( \mathfrak{E}[|\mathfrak{P}_n^\beth(f)(x) - f(x)|] \), variances \\
\hline
\textbf{Application in Image Processing} & Denoising, enhancement, interpolation using averaging kernels & Noise reduction, probabilistic restoration and filtering \\
\hline
\textbf{Approximation via Metrics} & 
\begin{minipage}[t]{\linewidth}
\begin{itemize}
    \item PSNR (Resolution, enhancement)
    \item SSIM (Structural based parameters )
    \item MAE
\end{itemize}
\end{minipage}
& 
\begin{minipage}[t]{\linewidth}
\begin{itemize}
    \item Probabilistic PSNR: \( \mathfrak{E}[\text{PSNR}] \)
    \item Probabilistic SSIM: \( \mathfrak{E}[\text{SSIM}] \)
    \item Variance of error for consistency
\end{itemize}
\end{minipage} \\
\hline
\textbf{Convergence Study} & Converges to \( f(x) \) in \( L^p \)-norm or uniformly for smooth \( f \) & Converges in probability, or almost surely under integrability and summability conditions \\
\hline
\end{tabular}
\vskip.15in
\vskip.15in
\caption{Comparison between SK-operators and PSK-operators via standard metrics.}
\end{table}
\end{itemize}

\centering
\vskip.15in
\vskip.15in
\vskip.15in
\vskip.15in
\vskip.15in
\vskip.15in
\vskip.15in
\vskip.15in
\vskip.15in
\vskip.15in
\newpage

\vskip.15in
\vskip.15in

\vskip.15in
\vskip.15in
\vskip.15in
\vskip.15in
\vskip.15in
\vskip.15in
\vskip.15in
\vskip.15in
\vskip.15in
\vskip.15in
\vskip.15in
\vskip.15in
\newpage
\vskip.15in
\vskip.15in

\end{example}
In the above discussion, we have demonstrated the empirical and mathematical study in specific parameters like PSNR, SSIM and MAE of images at a certain level of sample values. Moreover, the error and expected error reveal an important feature called `detection of image' for various samples.
 Consequently, let us give a comparative analysis of various aspects such as stability, error matrices, etc. between the SK-operators and the PSK-operators `in Table 4.

\section{Conclusion} In this research paper, the authors have made an effort to discover novel probability SK operators and have derived the fundamental theorem of approximation. Analogously, a deep comparative study is established between the classical and probabilistic SK operators not only in the form of numerical data but also in the key features of an image in which the classical results are better than the probabilistic results. In contrast, the PSK- sampling operators provide better results as compared to SK-operators when dealing with the noise or randomness. More preciously, if the variance is low, the probabilistic operators yield stable results, which implies that the probabilistic SK-operators yield stable results.
\vspace{0.55cm}
\begin{center}
\textbf{Acknowledgments}
\end{center}
As the first author, I consider this work a significant step in my academic journey and a source of renewed inspiration for future research. It has strengthened my motivation to continue at the postdoctoral level, where I aspire to contribute meaningfully to the advancement of knowledge. I am driven by a vision to join the esteemed class of researchers whose work pushes me to invest my life in research.

Furthermore, he acknowledges the financial support provided by the ``Homi Bhabha Teaching cum Research Fellowship" from Dr. A. P. J. Abdul Kalam Technical University, Lucknow, India.
\vspace{0.35cm}
\begin{center}
\textbf{Compliance with Ethical Standards}
\end{center}
\textbf{Funding:} No funding was received to report this study.\\
\textbf{Conflict of interest:} All authors declare that there is no conflict of interest.\\
\textbf{Ethical approval:} This article does not contain studies with human participants or animals performed by any of the authors.\\
\textbf{Data availability:} We state that no data sets were generated or analyzed during the preparation of the manuscript.\\
\textbf{Code availability:} Not applicable.\\
\textbf{Authors' contributions:} All the authors have equally contributed to the conceptualization, framing, and writing of the manuscript.

\end{document}